\begin{document}

\newcommand{\tr}{\operatorname{tr}}

\newcommand{\dvol}{d\operatorname{Vol}}

\newcommand{\arXivNumber}{1401.5462}

\allowdisplaybreaks

\renewcommand{\PaperNumber}{083}

\FirstPageHeading

\ShortArticleName{Generalised Chern--Simons Theory and ${\rm G}_2$-instantons over Associative Fibrations}

\ArticleName{Generalised Chern--Simons Theory\\
and $\boldsymbol{{\rm G}_2}$-Instantons over Associative Fibrations}

\Author{Henrique N.~S\'A EARP}

\AuthorNameForHeading{H.N.~S\'a Earp}

\Address{Imecc - Institute of Mathematics, Statistics and Scientific Computing, Unicamp, Brazil}
\Email{\href{mailto:henrique.saearp@ime.unicamp.br}{henrique.saearp@ime.unicamp.br}}
\URLaddress{\url{http://www.ime.unicamp.br/~hqsaearp/}}

\ArticleDates{Received January 29, 2014, in f\/inal form August 07, 2014; Published online August 11, 2014}

\Abstract{Adjusting conventional Chern--Simons theory to ${\rm G}_2$-manifolds, one describes ${\rm G}_2$-instantons on
bundles over a~certain class of $7$-dimensional f\/lat tori which f\/iber non-trivially over $T^4$, by a~pullback argument.
Moreover, if $c_2\neq0$, any (generic) deformation of the ${\rm G}_2$-structure away from such a~f\/ibred structure causes
all instantons to vanish.
A~brief investigation in the general context of (conformally compatible) associative f\/ibrations $f:Y^7\to X^4$ relates
${\rm G}_2$-instantons on pullback bundles $f^*E\to Y$ and self-dual connections on the bundle $E\to X$ over the base,
a~fact which may be of independent interest.}

\Keywords{Chern--Simons; Yang--Mills; ${\rm G}_2$-manifolds; associative f\/ibrations}

\Classification{53C07; 53C38; 58J28}

\vspace{-2mm}

\section{Introduction}

This article f\/its in the context of gauge theory in higher dimensions, following the seminal works of S.~Donaldson \&
R.~Thomas, G.~Tian and others~\cite{Donaldson1998,Tian2000}.
The common thread to such generalisations is the presence of a~closed dif\/ferential form on the base manifold~$Y$,
inducing an analogous notion of anti-self-dual connections, or \emph{instantons}, on bundles over~$Y$.
In the case at hand, ${\rm G}_2$-manifolds are $7$-dimensional Riemannian manifolds with holonomy in the Lie group ${\rm
G}_2$, which implies the existence of precisely such a~structure.
This allows one to make sense of ${\rm G}_2$-\emph{instantons} as the energy-minimising gauge classes of connections,
solutions to the corresponding Yang--Mills equation.

Heuristically, ${\rm G}_2$-instantons are somewhat analogous to f\/lat connections in dimension~3.
Given a~bundle over a~compact $3$-manifold, with space of connections $\mathcal{A}$ and gauge group $\mathcal{G}$, the
\emph{Chern--Simons functional} is a~multi-valued real function on the quotient $\mathcal{B=A}/\mathcal{G}$, with
integer periods, whose critical points are precisely the f\/lat connections~\cite[\S~2.5]{Donaldson2002}.
Similar theories can be formulated in higher dimensions in the presence of a~suitable closed dif\/ferential
form~\cite{Donaldson1998,Thomas1997}; e.g.\ on a~${\rm G}_2$-manifold $(Y,\varphi)$,
the coassociative $4$-form $ \ast \varphi $ allows for the def\/inition of
a~functional of Chern--Simons type\footnote{In fact only the condition $d\ast\varphi=0$ is required, so the discussion
extends to cases in which the ${\rm G}_2$-structure~$\varphi$ is not necessarily torsion-free.}.
Its `gradient', the Chern--Simons 1-form, vanishes precisely at the ${\rm G}_2$-instantons, hence it detects the
solutions to the Yang--Mills equation.
These gauge-theoretic preliminaries are covered in Section~\ref{sec: gauge theory on G2-mfds}.

On the other hand, one may understand ${\rm G}_2$-manifolds as a~particular case of the rich theory of calibrated
geometries~\cite{Harvey1982}, for which the ${\rm G}_2$-structure~$\varphi$ is a~calibration 3-form.
Then a~3-dimensional submanifold~$P$ is said to be \emph{associative} if it is calibrated by~$\varphi$, i.e., if
$\varphi |_{P} = \dvol |_{P}$.
The deformation theory of associative submanifolds is known to be obstructed~\cite{McLean1998}, so their occurrence in
families, e.g.\ f\/ibering over a~4-manifold, is nongeneric and somewhat exotic.
Nonetheless, we may consider theoretically, at f\/irst, the existence of instantons over associative f\/ibrations $f:Y^7\to X^4$.
Given a~bundle $E\to X$, a~connection $\mathbf{A}$ on its pullback $\mathbf{E}$ is locally of the form
\begin{gather*}
\mathbf{A}\overset{\rm loc}{=} A_t(x)+\sum\limits_{i=1}^3 \sigma_i(x,t)dt^i,
\end{gather*}
where $ \{A_t \}$ is a~family of connections on~$E$ parametrised by the associative f\/ibers $P_x:=f^{-1}(x)$ and
$\sigma_i\in\Omega^0(Y,f^*\mathfrak{g}_E)$.
In Section~\ref{subsec: associative fibrations} I prove the following relation between ${\rm G}_2$-instantons and
families of self-dual connections over the base:
\begin{theorem}
\label{thm: G2-instantons over assoc fibrations}
Let $f:Y\to X$ define an associative fibration and $\mathbf{E}\to Y$ be the pullback from an indecomposable vector
bundle $ E\to X$.
\begin{enumerate}[$(i)$]\itemsep=0pt
\item If a~connection $\mathbf{A}$ on $\mathbf{E}$ is a~${\rm G}_2$-instanton, then $\{A_t\}$ is a~family of
self-dual connections on~$E$, satisfying
\begin{gather*}
\frac{\partial A_t}{\partial t^i}=d_{A_t}\sigma_i,
\qquad
i=1,2,3.
\end{gather*}
\item If, moreover, the family $A_t\equiv A_{t_0}$ is constant, then $\mathbf{A}=f^*A_{t_0}$ is a~pullback.
\end{enumerate}
\end{theorem}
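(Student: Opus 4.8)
The plan is to work in a local coframe adapted to the associative fibration and to read the instanton equation off type-by-type. First I would split the cotangent directions into the vertical $dt^1,dt^2,dt^3$ along the associative fibers $P_x$ and the horizontal $dx^1,\dots,dx^4$ pulled back from $X$, so that the (conformally compatible) ${\rm G}_2$-structure takes the standard associative model
\begin{gather*}
\varphi = dt^{123} + \sum_{i=1}^3 dt^i\wedge\omega_i,
\qquad
\ast\varphi = \dvol_X + \sum_{i=1}^3 dt^{jk}\wedge\omega_i,
\end{gather*}
where $(i,j,k)$ runs over cyclic permutations and $\omega_1,\omega_2,\omega_3$ is the standard basis of self-dual $2$-forms on the horizontal $\mathbb R^4$. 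Writing $\mathbf A = A_t + \sum_i\sigma_i\,dt^i$, I would decompose its curvature $\mathbf F$ into a horizontal part $F_{A_t}$, a mixed part $\sum_i\bigl(\partial_{t^i}A_t - d_{A_t}\sigma_i\bigr)\wedge dt^i$, and a vertical part $\sum_i D_i\,dt^{jk}$, the last being the curvature of $\mathbf A$ along each fiber $P_x$.

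Next I would impose the instanton condition $\mathbf F\wedge\ast\varphi=0$ (equivalently, that $\mathbf F$ lies in the $14$-dimensional component $\Lambda^2_{14}$) and sort the resulting $6$-form by vertical/horizontal bidegree. The components along $dt^{jk}\wedge\dvol_X$ yield a pointwise identification of the vertical curvature with the self-dual part of the horizontal one, $D_i\propto(F_{A_t}^+)_i$, while the components along $dt^{123}$ yield the single horizontal relation $\sum_i\bigl(\partial_{t^i}A_t - d_{A_t}\sigma_i\bigr)\wedge\omega_i = 0$. These are precisely the $3+4=7$ scalar equations that cut out $\Lambda^2_{14}$.

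The main obstacle is that these two coupled relations are weaker than the clean conclusion in $(i)$: pointwise they do not force each mixed term $\partial_{t^i}A_t - d_{A_t}\sigma_i$ to vanish separately, nor $F_{A_t}$ to be self-dual (equivalently $D_i\equiv0$). I expect this upgrade to be where the hypotheses genuinely enter, and to require a global argument. Since $\mathbf E=f^*E$ restricts trivially to each compact associative fiber, the fields $\sigma_i$ and the vertical curvature $D_i$ are globally defined sections of $f^*\mathfrak g_E$ over the closed manifold $Y$; I would therefore seek a Weitzenböck/integration-by-parts identity that combines the two relations with the second Bianchi identity and, discarding boundary terms on the closed $Y$, expresses $\|D\|_{L^2}^2$ and the $L^2$-norm of the mixed curvature as manifestly nonpositive quantities. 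This would force $D\equiv0$, the self-duality of each $A_t$, and the gradient equation $\partial_{t^i}A_t = d_{A_t}\sigma_i$ simultaneously. Here indecomposability of $E$ is what should rule out the nontrivial \emph{Higgs-type} solutions, in which a nonzero $F_{A_t}^+$ is balanced against a nonzero vertical curvature, that otherwise solve the coupled system; establishing this vanishing is the crux.

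Part $(ii)$ then follows quickly from the gradient equation. If $A_t\equiv A_{t_0}$ is constant, then $\partial_{t^i}A_t=0$, so the gradient equation reduces to $d_{A_{t_0}}\sigma_i=0$: each $\sigma_i$ is a covariantly constant section of $\mathfrak g_E$. Its eigenbundles then furnish an $A_{t_0}$-parallel, hence holonomy-invariant, orthogonal splitting of $E$; since $E$ is indecomposable this forces a single eigenvalue, so each $\sigma_i$ is a constant scalar and, being trace-free, vanishes. Consequently $\mathbf A = A_{t_0}=f^*A_{t_0}$ is a pullback, as claimed.
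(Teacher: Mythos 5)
Your setup---the adapted coframe, the decomposition of $F_{\mathbf A}$ into horizontal, mixed and vertical parts, and the extraction of the two coupled pointwise equations from $F_{\mathbf A}\wedge\ast\varphi=0$---coincides with the paper's, and you correctly observe that these pointwise relations alone do not yield part $(i)$. But the step you yourself call the crux is exactly the one you do not carry out: you only announce that you ``would seek'' a Weitzenb\"ock/integration-by-parts identity. That is a genuine gap, and your guess about how it should close is off target in two respects. First, the paper's global input is not a Weitzenb\"ock formula but an elementary energy comparison: the three curvature components have distinct vertical/horizontal bidegree, hence are pointwise orthogonal, so
\begin{gather*}
{\rm YM}(\mathbf A)=\Vert F_{A_t}\Vert^2+\sum_{i}\left\Vert d_{A_t}\sigma_i-\frac{\partial A_t}{\partial t^i}\right\Vert^2+\Vert F_\sigma\Vert^2,
\end{gather*}
while ${\rm YM}(\mathbf A)=\kappa(\mathbf E)$ because $\mathbf A$ is an instanton and fibrewise $4$-dimensional Chern--Weil theory bounds the horizontal term alone from below by the same topological constant. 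The mixed and vertical terms must therefore vanish; the gradient equation follows, and your own pointwise relation identifying $F_\sigma$ with the non-SD part of $F_{A_t}$ then gives self-duality of every $A_t$. Second, indecomposability plays no role in part $(i)$: there are no ``Higgs-type'' solutions to rule out, since the energy identity kills the vertical curvature unconditionally; the hypothesis is needed only in part $(ii)$.

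Your part $(ii)$ is correct and is essentially the paper's argument: $d_{A_{t_0}}\sigma_i=0$ produces parallel sections of $\mathfrak g_E$, which indecomposability forces to be trivial, whence $\sigma\equiv0$ and $\mathbf A=f^*A_{t_0}$.
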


 NB: We denote henceforth by $\mathcal{M}^{4}_+$ the moduli space of SD connections on the base and by~$\mathcal{M}^7_{\varphi}$ the moduli space of ${\rm G}_2$-instantons relative to ${\rm G}_2$-structure~$\varphi$.

Finally, over the remaining of Section~\ref{Sec 7-torus}, these ideas are applied to a~concrete example of certain
$T^3$-f\/ibrations over $T^4$, topologically equivalent to the 7-torus, which I will call \emph{${\rm G}_2$-torus
fibrations}~\cite{SaEarp2009}.
Deforming the metric (i.e.~the lattice) on $T^4$ induces a~change on the f\/ibration map and hence on the ${\rm
G}_2$-structure, and one can use Chern--Simons formalism to see how this af\/fects the moduli of ${\rm G}_2$-instantons:
\begin{theorem}
\label{thm: results}
Let $f:\mathbb{T} \rightarrow T^{4}$ be a~${\rm G}_2$-torus fibration, $\mathbf{E}\to\mathbb{T}$ be the pullback of an
indecomposable vector bundle $E\to T^4$ and~$\varphi$ denote the ${\rm G}_2$-structure of $\mathbb{T}$; then
\begin{enumerate}[$(i)$]\itemsep=0pt
\item every SD connection on~$E$ lifts to a~${\rm G}_2$-instanton on $\mathbf{E}$, i.e.,
\begin{gather*}
f^*\mathcal{M}^{4}_+\subset\mathcal{M}^7_{\varphi};
\end{gather*}
\item if, moreover, $c_2(E)\neq0$, then any perturbation $\varphi+\phi$ away from the class of fibred structures causes
the moduli space of ${\rm G}_2$-instantons to vanish, i.e.,
\begin{gather*}
\mathcal{M}^7_{\varphi+\phi}=\varnothing.
\end{gather*}
\end{enumerate}
\end{theorem}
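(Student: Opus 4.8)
\emph{Part $(i)$.} I would prove by a direct verification that pullbacks of self-dual connections solve the instanton equation; this is essentially Theorem~\ref{thm: G2-instantons over assoc fibrations} read in reverse. In a coframe adapted to the fibration the ${\rm G}_2$-torus structure is $\varphi=dt^{123}+\sum_{i=1}^{3}dt^{i}\wedge\omega_{i}$, with $\{\omega_{i}\}$ a basis of one chirality of $2$-forms on $T^{4}$, so that $\ast\varphi=e^{1234}-\sum_{i}\ast_{3}(dt^{i})\wedge\omega_{i}$. For $[A]\in\mathcal{M}^{4}_{+}$ the pullback $f^{*}A$ is the constant family $A_{t}\equiv A$ with $\sigma_{i}=0$, so its curvature $f^{*}F_{A}$ is horizontal; a one-line computation then reduces $f^{*}F_{A}\wedge\ast\varphi=0$ to the three equations $F_{A}\wedge\omega_{i}=0$, i.e.\ to self-duality of $F_{A}$, the $e^{1234}$-term dropping for degree reasons. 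Hence $f^{*}A\in\mathcal{M}^{7}_{\varphi}$ and $f^{*}\mathcal{M}^{4}_{+}\subset\mathcal{M}^{7}_{\varphi}$.

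\emph{Part $(ii)$.} First I would upgrade $(i)$ to an equality $\mathcal{M}^{7}_{\varphi}=f^{*}\mathcal{M}^{4}_{+}$: by Theorem~\ref{thm: G2-instantons over assoc fibrations}$(i)$ any instanton gives a family of self-dual connections with $\partial_{t^{i}}A_{t}=d_{A_{t}}\sigma_{i}$, so $t\mapsto[A_{t}]\in\mathcal{M}^{4}_{+}$ is constant; indecomposability of $E$ rigidifies the residual gauge and forces the family to be genuinely constant, whence $\mathbf{A}=f^{*}A_{t_{0}}$ by part~$(ii)$ of that theorem. The hypothesis $c_{2}(E)\neq0$ enters here decisively: it means $\mathbf{E}$ admits no flat connection, so every unperturbed instanton has $F\not\equiv0$. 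This excludes the one automatic source of persistence, since a flat connection is a ${\rm G}_{2}$-instanton for \emph{every} structure.

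\emph{Core of $(ii)$.} The engine is a pair of Chern--Weil identities. Writing $F=F_{7}+F_{14}$ and $F=F_{7}'+F_{14}'$ for the curvature splittings $\Lambda^{2}=\Lambda^{2}_{7}\oplus\Lambda^{2}_{14}$ determined by $\varphi$ and by $\varphi+\phi$, one has, up to the usual normalisation,
\[
\|F_{14}\|^{2}-2\|F_{7}\|^{2}=8\pi^{2}\big\langle c_{2}(\mathbf{E})\cup[\varphi],[\mathbb{T}]\big\rangle,\qquad \|F\|^{2}=8\pi^{2}\big\langle c_{2}(\mathbf{E})\cup[\varphi+\phi],[\mathbb{T}]\big\rangle,
\]
the second using that a $(\varphi+\phi)$-instanton has $F_{7}'=0$, together with closedness of $\varphi+\phi$ (as holds for the flat structures at hand). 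Subtracting, and using $\|F\|^{2}=\|F_{7}\|^{2}+\|F_{14}\|^{2}$ to leading order, isolates
\[
3\|F_{7}\|^{2}=8\pi^{2}\big\langle c_{2}(\mathbf{E})\cup[\phi],[\mathbb{T}]\big\rangle+O\big(\|\phi\|^{2}\big).
\]
Because $c_{2}(\mathbf{E})=f^{*}c_{2}(E)$ pairs with $[\mathbb{T}]$ only through the horizontal class $f^{*}[\operatorname{vol}_{T^{4}}]=[e^{1234}]$, the right-hand side equals $c_{2}(E)\int_{\mathbb{T}}e^{1234}\wedge\phi$, reading off the purely vertical $dt^{123}$-component of the deformation. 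Since $\|F_{7}\|^{2}\geq0$, this already forbids any instanton as soon as that pairing is negative; for the remaining directions I would realise $\mathcal{M}^{7}_{\bullet}$ as the zero set of the Chern--Simons $1$-form $a\mapsto\int_{\mathbb{T}}\tr(a\wedge F_{A})\wedge\ast\varphi$ on $\mathcal{B}=\mathcal{A}/\mathcal{G}$ and show that its variation $\delta\rho_{A}(a)=\int_{\mathbb{T}}\tr(a\wedge F_{A})\wedge\delta(\ast\varphi)$ fails to annihilate $T(f^{*}\mathcal{M}^{4}_{+})$ whenever $\phi$ leaves the fibred class --- the obstruction being a nonzero functional of the tangent precisely because $F_{A}\not\equiv0$, i.e.\ because $c_{2}(E)\neq0$. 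No critical point then survives, so $\mathcal{M}^{7}_{\varphi+\phi}=\varnothing$.

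\emph{Main obstacle.} The delicate points are twofold. Computationally, one must control the metric-dependence hidden in the $O(\|\phi\|^{2})$ term and identify $T(f^{*}\mathcal{M}^{4}_{+})$ with the kernel of the ${\rm G}_{2}$-instanton deformation operator in order to evaluate $\delta\rho$ honestly. Conceptually, the real work is to pass from ``no unperturbed instanton persists'' to ``$\mathcal{M}^{7}_{\varphi+\phi}$ is empty'': this needs an Uhlenbeck-compactness argument ruling out solutions concentrated away from $f^{*}\mathcal{M}^{4}_{+}$, and it is where the phrase ``away from the class of fibred structures'' must be read as the transversality (genericity) hypothesis that makes the obstruction above nondegenerate in every direction.
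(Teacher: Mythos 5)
Your part $(i)$ is correct and coincides with the paper's own argument (Lemma~\ref{lemma: SD lifts to G2-instanton} and Corollary~\ref{cor:self-dual in T4 lifts to G2-instanton}): the pullback of a self-dual connection has horizontal curvature and the instanton equation reduces to $F_A\wedge\omega_i=0$. Part $(ii)$, however, has a genuine gap, in two places. First, the Chern--Weil energy identity at the core of your argument does not see the deformations the theorem is about. Since $c_2(\mathbf{E})=f^*c_2(E)$ is represented by a horizontal $4$-form, the pairing $\langle c_2(\mathbf{E})\smallsmile[\phi],[\mathbb{T}]\rangle$ detects only the purely vertical component $h(x)\,dt^{123}$ of $\phi$; but under $\phi\mapsto\xi_\phi$ that component lands in $\Lambda^4(\mathbb{R}^4)$, i.e.\ type~(I) of Proposition~\ref{Prop: decomp deformations} --- a rescaling of $\eta$, which \emph{preserves} the fibred structure. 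A deformation away from the fibred class is of type~(IV), $\xi_\phi=\varepsilon\wedge dt^{123}$ with $\varepsilon\in\Lambda^1(\mathbb{R}^4)$, corresponding to a $\Lambda^3(\mathbb{R}^4)$-component of $\phi$; for such $\phi$ your pairing vanishes identically (a horizontal $4$-form wedged with a horizontal $3$-form is zero on a $4$-dimensional base). Even where the pairing is nonzero, the inequality $3\Vert F_7\Vert^2\geq0$ obstructs only one sign of the perturbation. So the proposed identity yields no obstruction for type~(IV) perturbations.

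Second, your fallback --- computing $\delta\rho$ on $T(f^*\mathcal{M}^{4}_+)$ and then invoking Uhlenbeck compactness to exclude solutions far from the lifted locus --- would at best show non-persistence of the lifted instantons, and the step you correctly identify as ``the real work'' is left entirely unspecified. The paper needs none of it: Lemmas~\ref{lemma: rho(bv) constant} and~\ref{lemma r_phi(bv) nonzero} and Proposition~\ref{prop:no (phi+phi-instantons)} evaluate the perturbed Chern--Simons $1$-form on the distinguished tangent vector $(\beta_v)_A=v\lrcorner F_A$ generated by translation by $v\in T^4$. Both $A\mapsto\rho(\beta_v)_A$ and $A\mapsto r_\phi(\beta_v)_A$ are \emph{constant on the whole affine space} $\mathcal{A}$, by Chern--Weil, Stokes and $\mathcal{L}_v(\ast\varphi)=0$; the first constant equals $0$ (evaluate at a lifted instanton), and for a type~(IV) perturbation one chooses $v$ with $r_\phi(\beta_v)=\varepsilon(v)\,c_2(E)\neq0$. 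Hence $\rho_{\varphi+\phi}(\beta_v)\neq0$ at \emph{every} connection and $\mathcal{M}^7_{\varphi+\phi}=\varnothing$ outright, with no transversality, genericity or compactness input. Incidentally, your preliminary upgrade $\mathcal{M}^7_{\varphi}=f^*\mathcal{M}^{4}_+$ is both unjustified (Theorem~\ref{thm: G2-instantons over assoc fibrations}$(ii)$ \emph{assumes} the family is constant, and the paper's Remark only places the family in a single gauge orbit when $\mathcal{M}^{4}_+$ is discrete) and unnecessary for part~$(ii)$.
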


The construction of ${\rm G}_2$-instantons is a~recent and active research area.
Indeed Theorem~\ref{thm: results} yields nontrivial, albeit nongeneric, examples of ${\rm G}_2$-instanton moduli,
whenever a~complex vector bundle $E\to T^4$ admits SD connections.
The interested reader will f\/ind other examples in works of Walpuski, Clarke and the
author~\cite{Clarke2013,SaEarp2009,SaEarp2011,SaEarp2013,Walpuski2011b}.
In the high-energy physics community, solutions to a~very similar problem in the context of ${\rm G}_2$-structures with
torsion have been found eg.
for cylinders over nearly-K\"ahler homogeneous spaces~\cite{Harland2010} and more generally for cones over nontrivial
manifolds admitting real Killing spinors~\cite{Ivanova2012}.

Finally, a~paper just published by Wang~\cite{Wang2012} makes signif\/icant progress towards a~Donaldson theory over
higher-dimensional foliations, which seems to encompass our ${\rm G}_2$-torus f\/ibration as a~special, codimension~4
tight foliation, whose leaf space is the smooth $4$-manifold~$X$.
It is inspiring to speculate whether an invariant of the corresponding foliated moduli space can be explicitly computed
for some suitable bundle $\mathbf{E}\to\mathbb{T}$, or indeed if that space coincides with our def\/inition of
$\mathcal{M}^7$.

\section[Gauge theory over ${\rm G}_2$-manifolds]{Gauge theory over $\boldsymbol{{\rm G}_2}$-manifolds}
\label{sec: gauge theory on G2-mfds}

I will concisely recall the essentials of gauge theory on ${\rm G}_2$-manifolds, while referring the interested reader
to a~more detailed exposition in~\cite{SaEarp2011}.

Let~$Y$ be an oriented smooth $7$-manifold; a~${\rm G}_2$-\emph{structure} is a~smooth $3$-form $\varphi \in
\Omega^{3}(Y) $ such that, at every point $p\in Y$, one has $\varphi_{p}=f_{p}^{\ast}(\varphi_{0})
$ for some frame $f_{p}:T_{p}Y\rightarrow \mathbb{R}^{7}$ and (adopting the conventions of~\cite{Salamon1989})
\begin{gather}
\label{eq:G2 3-form}
\varphi_0 =e^{567}+\omega_1\wedge e^{5} +\omega_2\wedge e^{6} +\omega_3\wedge e^{7}
\end{gather}
with
\begin{gather*}
\omega_1= e^{12} - e^{34},
\qquad
\omega_2= e^{13} - e^{42}
\qquad
\text{and}
\qquad
\omega_3= e^{14} - e^{23}.
\end{gather*}
Moreover,~$\varphi$ determines a~Riemannian metric $g(\varphi)$ induced by the pointwise inner-product
\begin{gather}
\label{eq:phi0 gives inner product}
 \langle u,v \rangle e^{1\dots 7}:=\frac{1}{6}  (u\lrcorner \varphi_{0} ) \wedge  (v\lrcorner
\varphi_{0} ) \wedge \varphi_{0},
\end{gather}
under which $\ast_\varphi\varphi$ is given pointwise~by
\begin{gather}
\label{eq:G2 4-form}
\ast \varphi_0 = e^{1234}-\omega_1\wedge e^{67} -\omega_2\wedge e^{75} -\omega_3\wedge e^{56}  .
\end{gather}
Such a~pair $(Y,\varphi ) $ is a~\emph{${\rm G}_2$-manifold} if $d\varphi =0$ and $d\ast_{\varphi}\varphi
=0$.

\subsection[The ${\rm G}_2$-instanton equation]{The $\boldsymbol{{\rm G}_2}$-instanton equation}

The ${\rm G}_2$-structure allows for a~$7$-dimensional analogue of conventional Yang--Mills
theory, yielding a~notion of (anti-)self-duality for $2$-forms.
Under the usual identif\/ication between $2$-forms and matrices, we have $\mathfrak{g}_{2}\subset
\mathfrak{so}(7) \simeq \Lambda^{2}$, so we denote $\Lambda_{14}^{2}:= \mathfrak{g}_{2}$ and
$\Lambda_{7}^{2}$ its orthogonal complement in $\Lambda^{2}$:
\begin{gather}
\label{eq:split}
\Lambda^{2} = \Lambda_{7}^{2}\oplus \Lambda_{14}^{2}.
\end{gather}
It is easy to check that $\Lambda_{7}^{2}= \langle e_1\lrcorner\varphi_0,\dots,e_7\lrcorner\varphi_0  \rangle$,
hence the orthogonal projection onto $\Lambda_{7}^{2}$ in~\eqref{eq:split} is given~by
\begin{alignat*}{3}
& L_{\ast \varphi_{0}}: \ \ && \Lambda^{2}  \rightarrow  \Lambda^{6},  &
\\
&&& \eta  \mapsto  \eta \wedge \ast \varphi_{0}&
\end{alignat*}
in the sense that~\cite[p.~541]{Bryant1985}
\begin{gather*}
L_{\ast \varphi_{0}} |_{\Lambda_{7}^{2}}: \ \ \Lambda_{7}^{2}
\; \tilde{\rightarrow}\;  \Lambda^{6}
\qquad
\text{and}
\qquad
L_{\ast \varphi_{0}}|_{\Lambda_{14}^{2}}=0.
\end{gather*}
Furthermore, since~\eqref{eq:split} splits $\Lambda^2$ into irreducible representations of ${\rm G}_2$, a~little
inspection on generators reveals that $(\Lambda^{2})_{^{7}_{14}}$ is respectively the $_{+1}^{-2}$-eigenspace of the $ {\rm G}_{2}$-equivariant linear map
\begin{alignat*}{3}
& T_{\varphi_0}: \ \ &&
\Lambda^{2}  \rightarrow  \Lambda^{2}, &&
\\
&&& \eta  \mapsto  T_{\varphi_0}\eta:=\ast  (\eta \wedge \varphi_{0} ). &
\end{alignat*}

Consider now a~vector bundle $E\rightarrow Y$ over a~compact ${\rm G}_2$-manifold $(Y,\varphi ) $; the
curvature $F:=F_{A} $ of some connection~$A$ decomposes according to the splitting~\eqref{eq:split}:
\begin{gather*}
F_A=F_{7}\oplus F_{14},
\qquad
F_{i}\in \Omega_{i}^{2} (\operatorname{End}E ),
\qquad
i=7,14.
\end{gather*}
The $L^2$-norm of $F_{A}$ is the \emph{Yang--Mills functional}:
\begin{gather}
\label{YM(A)}
{\rm YM}(A) := \Vert F_{A}\Vert^{2} =\Vert F_7\Vert^{2} +\Vert F_{14}\Vert^{2}.
\end{gather}

It is well-known that the values of ${\rm YM}(A)$ can be related to a~certain characteristic class of the
bundle~$E$, given (up to choice of orientation)~by
\begin{gather*}
\kappa  (E ):=-\int_{Y} \tr\big(F_{A}^{2}\big) \wedge \varphi.
\end{gather*}
Using the property $d\varphi =0$, a~standard argument of Chern--Weil theory~\cite{Milnor1974} shows that the de~Rham
class $[\tr(F_{A}^{2}) \wedge \varphi ]$ is independent of~$A$, thus the integral is indeed
a~topological invariant.
The eigenspace decomposition of $T_{\varphi}$ implies (up to a~sign)
\begin{gather*}
\kappa  (E ) = -2 \Vert F_{7} \Vert^{2}+ \Vert F_{14} \Vert^{2},
\end{gather*}
and combining with~\eqref{YM(A)} we get
\begin{gather*}
{\rm YM}(A)=-\frac{1}{2}\kappa (E)+\frac{3}{2}\Vert F_{14}\Vert^{2} =\kappa (E)+3 \Vert F_{7} \Vert^{2}.
\end{gather*}
Hence ${\rm YM}(A) $ attains its absolute minimum at a~connection whose curvature lies either in $\Lambda_{7}^{2}$
or in $\Lambda_{14}^{2}$.
Moreover, since ${\rm YM}\geq0$, the sign of $\kappa(E)$ obstructs the existence of one type or the other, so we f\/ix
$\kappa(E)\geq0$ and def\/ine \emph{${\rm G}_2$-instantons} as connections with $ F\in\Lambda_{14}^{2}$, i.e., such that
${\rm YM}(A)=\kappa(E)$.
These are precisely the solutions of the ${\rm G}_2$-\emph{instanton equation}:
\begin{subequations}
\begin{gather}
\label{eq:G2-intanton equation II}
F_A\wedge\ast\varphi=0
\end{gather}
or, equivalently,
\begin{gather}
\label{eq:G2-intanton equation}
F_A-\ast (F_A\wedge\varphi )=0.
\end{gather}
\end{subequations}
If instead $\kappa(E)\leq0$, we may still reverse orientation and consider $ F\in\Lambda_{14}^{2}$, but then the above
eigenvalues and energy bounds must be adjusted accordingly, which amounts to a~change of the $(-)$ sign in~\eqref{eq:G2-intanton equation}.

\subsection[Def\/inition of the Chern--Simons functional~$\vartheta$]{Def\/inition of the Chern--Simons functional~$\boldsymbol{\vartheta}$}\label{Subsect Chern-Simmons}

Gauge theory in higher dimensions can be formulated in terms of the geometric structure of manifolds with exceptional
holonomy~\cite{Donaldson1998}.
In particular, instantons can be characterised as critical points of a~Chern--Simons functional, hence zeroes of its
gradient 1-form~\cite{Donaldson2002}.
The explicit case of ${\rm G}_2$-manifolds, which we now describe, was f\/irst examined in the author's
thesis~\cite{SaEarp2009}.

Let $E\to Y$ be a~vector bundle; the space $\mathcal{A}$ is an af\/f\/ine space modelled on
$\Omega^{1}(\mathfrak{g}_{E}) $ so, f\/ixing a~reference connection $A_{0}\in \mathcal{A}$,
\begin{gather*}
\mathcal{A}=A_{0}+\Omega^{1} (\mathfrak{g}_{E} )
\end{gather*}
and, accordingly, vectors at $A\in\mathcal{A}$ are 1-forms $a,b,\ldots\in
T_A\mathcal{A}\simeq\Omega^{1}(\mathfrak{g}_{E}) $ and vector f\/ields are maps
$\alpha,\beta,\ldots:\mathcal{A}\to\Omega^{1}(\mathfrak{g}_{E})$.
In this notation we def\/ine the \emph{Chern--Simons functional}~by
\begin{gather*}
\vartheta  (A ):=\tfrac{1}{2}\int_{Y}\tr\left(d_{A_{0}}a\wedge
a+\frac{2}{3}a\wedge a\wedge a\right) \wedge \ast \varphi,
\end{gather*}
f\/ixing $ \vartheta (A_0) =0$.
This function is obtained by integration of the \emph{Chern--Simons $1$-form}
\begin{gather}
\label{ro^ phi}
\rho  (\beta  )_{A}=\rho_A(\beta_A):=\int_{Y} \tr (F_{A}\wedge
\beta_A ) \wedge \ast \varphi.
\end{gather}
We f\/ind $\vartheta $ explicitly by integrating $\rho $ over paths $A(t) =A_{0}+ta$, from $ A_{0}$ to any
$A=A_{0}+a$:
\begin{gather*}
\vartheta (A) -\vartheta (A_{0}) = \int_{0}^{1}\rho_{A(t)}  (\dot{A}(t) ) dt
 = \int_{0}^{1} \left(\int_{Y}\tr\big(\big(F_{A_{0}}+td_{A_{0}}a +t^{2}a\wedge a\big) \wedge a\big) \wedge \ast
\varphi \right)dt
\\
\phantom{\vartheta (A) -\vartheta (A_{0})}
 = \frac{1}{2}\int_{Y}\tr\left(d_{A_{0}}a\wedge a+\frac{2}{3} a\wedge a\wedge a\right) \wedge \ast \varphi +K,
\end{gather*}
where $K=K(A_0,a)$ is a~constant and vanishes if $A_0$ is an instanton.

The co-closedness condition $d\ast \varphi =0$ implies that the $1$-form~\eqref{ro^ phi} is closed, so the procedure
doesn't depend on the path $A(t)$.
Indeed, given tangent vectors $a,b\in\Omega^{1}(\mathfrak{g}_{E}) $ at~$A$, the leading term in the expansion
of $\rho$,
\begin{gather*}
\rho_{A+a}(b) -\rho_{A}(b) = \int_{Y}\tr (d_{A}a\wedge b ) \wedge \ast \varphi +
O\big( \vert b \vert^{2} \big),
\end{gather*}
is symmetric by Stokes' theorem:
\begin{gather*}
\int_{Y}\tr (d_{A}a\wedge b-a\wedge d_{A}b ) \wedge \ast \varphi =\int_{Y}d (\tr (b\wedge a )
\wedge \ast \varphi  ) =0.
\end{gather*}
We conclude that
\begin{gather*}
\rho_{A+a} (b)-\rho_{A}(b)=\rho_{A+b}(a) -\rho_{A}(a)+O\big(\vert b\vert^{2}\big)
\end{gather*}
and, comparing reciprocal Lie derivatives on parallel vector f\/ields $\alpha\equiv a$, $\beta\equiv b$ near a~point~$A$, we
have:
\begin{gather*}
d\rho (\alpha,\beta)_{A} =   (\mathcal{L}_{b}\rho )_{A}(a) - (\mathcal{L}_{a}\rho )_{A}(b)
 =  \lim\limits_{h\rightarrow 0}\frac{1}{h} \big\{ \rho_{A+hb} (a) -\rho_{A} (a) ) - (\rho_{A+ha} (b)
-\rho_{A}(b) ) \big\}
\\
\phantom{d\rho (\alpha,\beta)_{A}}
 =  \lim\limits_{h\rightarrow 0}\frac{1}{h^{2}}
\underset{O\big( \vert h \vert^{3}\big)} {\underbrace{\big\{ (\rho_{A+hb}  (ha ) -\rho_{A}
 (ha ) ) - (\rho_{A+ha}  (hb ) -\rho_{A}  (hb )  ) \big\}}}
 = 0.
\end{gather*}
Since $\mathcal{A}$ is contractible, by the Poincar\'{e} lemma $ \rho $ is the derivative of some function $\vartheta $.
Again by Stokes, $\rho $ vanishes along $\mathcal{G}$-orbits $\operatorname{im}(d_{A}) \simeq
T_{A} \{\mathcal{G}.A \} $.
Thus $\rho $ descends to the quotient $\mathcal{B}$ and so does $\vartheta $, locally.

\subsection[Periodicity of~$\vartheta$]{Periodicity of~$\boldsymbol{\vartheta}$}

Consider the gauge action of $g\in \mathcal{G}$ and some path $ \{A(t) \}_{t\in
[0,1]}\subset \mathcal{A}$ connecting an instanton~$A$ to~$g.A$.
The natural projection $p_{1}:Y\times [0,1] \rightarrow Y$ induces a~bundle
\begin{gather*}
\begin{array}{@{}ccc} \mathbf{E}_{g} & \overset{\widetilde{p_{1}}}{\longrightarrow} & E
\\
\downarrow & & \downarrow
\\
Y\times [0,1] & \overset{p_{1}}{\longrightarrow} & Y
\end{array}
\end{gather*}
and, using~$g$ to identify the f\/ibres $(\mathbf{E}_{g})_{0}
\overset{g}{\simeq}(\mathbf{E}_{g})_{1}$, one may think of $ \mathbf{E}_{g}$ as a~bundle over $Y\times
S^{1}$.
Moreover, in some local trivialisation, the path $A(t) =A_{i}(t) dx^{i}$ gives a~connection
$\mathbf{A}=\mathbf{A}_{0}dt+\mathbf{A}_{i}dx^{i}$ on $\mathbf{E}_{g}$:
\begin{gather*}
(\mathbf{A}_{0})_{(t,p)}=0,
\qquad
(\mathbf{A}_{i})_{(t,p)} =A_{i}(t)_{p}.
\end{gather*}
The corresponding curvature $2$-form is $F_{\mathbf{A}}=(F_{\mathbf{A}})_{i0}dx^{i}\wedge
dt+(F_{\mathbf{A}})_{jk}dx^{j}\wedge dx^{k}$, where
\begin{gather*}
(F_{\mathbf{A}})_{i0} =\dot{A}_{i}(t),
\qquad
(F_{\mathbf{A}})_{jk} =(F_{A})_{jk}.
\end{gather*}
The periods of~$\vartheta$ are then of the form
\begin{gather*}
\vartheta (g.A) -\vartheta (A)  =  \int_{0}^{1}\rho_{A(t)} (\dot{A}(t)) dt
 =  \int_{Y\times [0,1]}\tr(F_{A(t)} \wedge \dot{A}_{i}(t) dx^{i}) \wedge dt\wedge \ast\varphi
\\
\phantom{\vartheta (g.A) -\vartheta (A)}{}
 =  \int_{Y\times S^{1}}\tr F_{\mathbf{A}} \wedge F_{\mathbf{A}} \wedge\ast \varphi
 =  \frac{1}{8\pi^2} \big\langle c_{2} (\mathbf{E}_{g} ) \smallsmile  [\ast\varphi  ],Y\times S^{1}\big\rangle.
\end{gather*}
The K\"{u}nneth formula for $Y\times S^{1}$ gives
\begin{gather*}
H^{4}\big(Y\times S^{1},\mathbb{R}\big) =H^{4} (Y,\mathbb{R} ) \oplus H^{3} (Y,\mathbb{R} ) \otimes
\underset{\mathbb{Z}}{\underbrace{H^{1}\big(S^{1},\mathbb{R}\big)}}
\end{gather*}
and obviously $H^{4}(Y) \smallsmile [\ast \varphi ] =0$
so, denoting~by
$c_{2}^{\prime}(\mathbf{E}_{g}) $ the component lying in $H^{3}(Y) $ and~by
$S_{g}:=[\frac{1}{8\pi^2} c_{2}^{\prime}(\mathbf{E}_{g}) ]^{PD}$ its normalised Poincar\'e dual,
we are left with
\begin{gather*}
\vartheta (g.A) -\vartheta (A) = \langle  [\ast \varphi ],S_{g} \rangle.
\end{gather*}
Consequently, the periods of $\vartheta $ lie in the set
\begin{gather*}
\left\{ \int_{S_{g}}\ast \varphi
\,\bigg\vert\, S_{g}\in H_{4} (Y,\mathbb{R} ) \right\}.
\end{gather*}
That may seem odd at f\/irst, because $\ast\varphi$ is not, in general, an integral class and so the set of periods is
\emph{dense}.
However, as long as our interest remains in the study of the moduli space $\mathcal{M}=\operatorname{Crit}(\rho)$ of ${\rm
G}_2$-instantons, there is not much to worry, for the gradient $\rho =d\vartheta $ is unambiguously def\/ined on
$\mathcal{B}$.

\section[Instantons over ${\rm G}_2$-torus f\/ibrations]{Instantons over $\boldsymbol{{\rm G}_2}$-torus f\/ibrations}\label{Sec 7-torus}

Instances of ${\rm G}_2$-manifolds f\/ibred by associative submanifolds in the literature are relatively scarce, not least
because their deformation theory is zero-index elliptic~\cite{McLean1998} and therefore any new examples will be
somewhat exotic.
A~few trivial cases include the products $T^7=T^4\times T^3$ and $K3\times T^3$ and also $CY^3\times S^1$ given a~family
of curves in the Calabi--Yau~\cite[\S~10.8]{Joyce2000}.
The example I will propose is unique in the sense that the total space is not a~Riemannian product.

\subsection{Instantons over associative f\/ibrations}
\label{subsec: associative fibrations}

We consider pullback bundles over smooth associative f\/ibrations, and relate ${\rm G}_2$-instantons to their gauge theory
over the base; in particular we do not address the possibility of singular f\/ibres.

\begin{definition}
\label{def:assoc_fibration}
A~${\rm G}_2$-manifold $(Y^7,\varphi)$ is called an \emph{associative fibration} over a~compact oriented
Riemannian four-manifold $(X^4,\eta)$ if it is the total space of a~Riemannian submersion $f:Y\to X$ such that each
f\/ibre $ P_x:=f^{-1}(x)\subset Y$ is a~smooth associative submanifold.
\end{definition}

Since each f\/ibre $P_x$ is $3$-dimensional and orientable, its tangent bundle is dif\/ferentiably trivial and we may
choose global coordinates $t=(t^1,t^2,t^3)$ induced respectively by a~global coframe
$ \{e_5,e_6,e_7 \}:= \{dt^1,dt^2,dt^3 \}$.
Thus near each $y\in P_x$ we may complete the triplet into a~local orthogonal coframe $ \{e_1,\dots,e_7 \}$ of
$T^*Y$ such that $\varphi_y$ has the form~\eqref{eq:G2 3-form}, and the point~$y$ is unambiguously described~by
$(x,t(y))$.

\begin{lemma}
\label{lemma: SD lifts to G2-instanton}
Let $f:Y\to X$ define an associative fibration and $\mathbf{E}\to Y$ be the pullback from a~vector bundle $ E\to X$;
then a~connection~$A$ on~$E$ is self-dual if, and only if, $f^*A$ is a~${\rm G}_2$-instanton on $\mathbf{E}$.
\begin{proof}
Let $F:=(F_{f^*A})_y$ be the curvature 2-form at $y\in P_x$; then
\begin{gather*}
\ast_\varphi (F\wedge \varphi )  \overset{\rm loc}{=}  \ast_\varphi\big[F\wedge
\big(\varphi\vert_{P_x} +\omega_1\wedge e^5 +\omega_2\wedge e^6 +\omega_3\wedge e^7 \big)\big]
 \!=\!  \ast_\eta F\! + \ast_\varphi \big[O(F^{-})\wedge f^*\dvol_\eta\big],\!
\end{gather*}
where $O(F^{-}):=(F_{34}-F_{12} )e^5+(F_{42}-F_{13} )e^6+(F_{23}-F_{14} )e^7$ vanishes
precisely when~$A$ is self-dual, i.e., when $F=\ast_\eta F$ satisfies the ${\rm G}_2$-instanton equation~\eqref{eq:G2-intanton equation}.
\end{proof}
\end{lemma}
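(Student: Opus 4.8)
The plan is to reduce the statement to the pointwise algebraic identity asserted in the displayed formula, and then read off both directions from the instanton equation \eqref{eq:G2-intanton equation} in the form $F-\ast_\varphi(F\wedge\varphi)=0$. First I would fix a point $y\in P_x$ and work in the adapted coframe $\{e_1,\dots,e_7\}$ of the preceding paragraph, in which $\{e_5,e_6,e_7\}=\{dt^1,dt^2,dt^3\}$ span the vertical cotangent space and $\varphi_y$ has the standard form \eqref{eq:G2 3-form}. Because $\mathbf{A}=f^*A$ is a pullback, its curvature is $F_{\mathbf{A}}=f^*F_A$; since $f$ is a submersion this $2$-form annihilates vertical vectors, so at $y$ it is horizontal, $F\overset{\rm loc}{=}\sum_{1\le i<j\le4}F_{ij}e^{ij}$, with coefficients determined by $(F_A)_x$. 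Relative to the base orientation $f^*\dvol_\eta=e^{1234}$, self-duality of $A$ at $x$ then amounts to the equalities $F_{12}=F_{34}$, $F_{13}=F_{42}$, $F_{14}=F_{23}$, i.e.\ to the vanishing of $O(F^-)$.

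The core is the explicit evaluation of $\ast_\varphi(F\wedge\varphi)$. I would split $\varphi$ as in \eqref{eq:G2 3-form} into its purely vertical part $e^{567}=\varphi|_{P_x}$ and the three mixed terms $\omega_k\wedge e^{k+4}$. Wedging the horizontal $F$ against $e^{567}$ produces the $5$-form $\sum F_{ij}e^{ij567}$, whose $\ast_\varphi$ is exactly the horizontal $2$-form $\ast_\eta F$ (one checks $\ast_\varphi(e^{12567})=e^{34}=\ast_\eta e^{12}$, and cyclically), so this piece returns the base Hodge dual. Wedging $F$ against $\omega_k\wedge e^{k+4}$ instead collapses the horizontal $4$-form $F\wedge\omega_k$ to a multiple of $e^{1234}=f^*\dvol_\eta$; a short computation gives $F\wedge\omega_1=(F_{34}-F_{12})e^{1234}$ and the two cyclic analogues, so the three terms assemble precisely into $O(F^-)\wedge f^*\dvol_\eta$, whose $\ast_\varphi$ lands in the vertical plane $\langle e^{56},e^{67},e^{75}\rangle$. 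This yields the claimed decomposition $\ast_\varphi(F\wedge\varphi)=\ast_\eta F+\ast_\varphi[O(F^-)\wedge f^*\dvol_\eta]$.

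Finally I would feed this into the instanton equation. Writing $F=\ast_\varphi(F\wedge\varphi)$ and matching horizontal and vertical components separately: the horizontal part gives $F=\ast_\eta F$, i.e.\ self-duality on the base, while the vertical part forces $\ast_\varphi[O(F^-)\wedge f^*\dvol_\eta]=0$, hence $O(F^-)=0$ --- the same self-duality condition, since $O(F^-)$ is, up to sign, the anti-self-dual part of $F$. Conversely, if $A$ is self-dual then $O(F^-)=0$ and $F=\ast_\eta F$, so the right-hand side collapses to $F$ and $f^*A$ solves \eqref{eq:G2-intanton equation II}. The only delicate point is bookkeeping: keeping every sign in the $7$-dimensional Hodge star and in the collapses $F\wedge\omega_k$ consistent, and pinning the orientation convention $f^*\dvol_\eta=e^{1234}$ so that $\ast_\varphi(F\wedge e^{567})$ is genuinely $\ast_\eta F$ rather than its negative. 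With those conventions fixed, both directions fall out of the single identity above.
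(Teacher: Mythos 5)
Your proposal is correct and follows essentially the same route as the paper's proof: the same decomposition $\varphi = \varphi|_{P_x} + \sum_k \omega_k\wedge e^{k+4}$ in the adapted coframe, the same pointwise identity $\ast_\varphi(F\wedge\varphi) = \ast_\eta F + \ast_\varphi\big[O(F^-)\wedge f^*\dvol_\eta\big]$, and the same observation that $O(F^-)$ is the obstruction to self-duality. You merely make explicit the steps the paper leaves implicit --- the horizontality of $F_{f^*A}=f^*F_A$, the verifications $\ast_\varphi(e^{12567})=e^{34}=\ast_\eta e^{12}$ and $F\wedge\omega_1=(F_{34}-F_{12})e^{1234}$, and the separation of horizontal and vertical components in the instanton equation --- all of which are consistent with the paper's conventions.
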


We are now in position to prove Theorem~\ref{thm: G2-instantons over assoc fibrations}.
Let us examine the general form of a~${\rm G}_2$-instanton on $\mathbf{E}$.
An arbitrary connection $\mathbf{A}$ on $\mathbf{E}$ is locally of the form
\begin{gather*}
\mathbf{A}(y) \overset{\rm loc}{=} A_t(x)+\sum\limits_{i=1}^3 \sigma_i(x,t)dt^i,
\end{gather*}
where $ \{A_t\}_{t\in t(P_x)}$ is a~family of connections on~$E$ and
$\sigma_i\in\Omega^0(Y,f^*\mathfrak{g}_E)$.
The curvature of $\mathbf{A}$ is
\begin{gather*}
F_\mathbf{A} = F_{A_t} +\sum\limits_{i=1}^3 \left(d_{A_t}\sigma_i-\frac{\partial A_t}{\partial t^i} \right) \wedge dt^i
+F_\sigma
\end{gather*}
with
\begin{gather*}
F_\sigma:= \sum\limits_{i,j=1}^3 \left(\frac{\partial \sigma_i}{\partial t^j}-\frac{\partial \sigma_j}{\partial
t^i}+\frac{1}{2} [\sigma_i,\sigma_j  ] \right) dt^i\wedge dt^j.
\end{gather*}
Replacing $F_\mathbf{A}$ into the ${\rm G}_2$-instanton equation~\eqref{eq:G2-intanton equation II} and using the
expression~\eqref{eq:G2 4-form} of $\ast\varphi$ in the natural frame $ \{e_1,\dots,e_7 \}$, we have
\begin{gather*}
\left(F_{A_t} + \sum\limits_{i=1}^3  (d_{A_t}\sigma_i- \frac{\partial A_t}{\partial t^i} ) \wedge e^{4+i} +
F_\sigma \right) \wedge \big(e^{1234}-\omega_1\wedge e^{67} -\omega_2\wedge e^{75} -\omega_3\wedge e^{56} \big) =0.
\end{gather*}
Using the following elementary properties
\begin{gather*}
  F_{A_t}\wedge e^{1234} =0,
\qquad
F_{A_t}\wedge \omega_1\wedge e^{67} = [(F_{A_t})_{34}-(F_{A_t})_{12}] \big({\ast}e^5\big),
\\
 F_{A_t}\wedge \omega_2\wedge e^{75} =[(F_{A_t})_{42}-(F_{A_t})_{13}]\big({\ast}e^6\big),
\qquad
F_{A_t}\wedge \omega_3\wedge e^{56}=[(F_{A_t})_{23}-(F_{A_t})_{14}]\big({\ast}e^7\big),
\\
  F_\sigma\wedge e^{4+i}\wedge e^{4+j}=0,
\qquad
F_\sigma\wedge e^{1234} = (F_\sigma)_{23}\big({\ast}e^5\big) +(F_\sigma)_{31}\big({\ast}e^6\big)+(F_\sigma)_{12}\big({\ast}e^7\big),
\end{gather*}
and the fact that each $d_{A_t}\sigma_i$ and $\frac{\partial A_t}{\partial t^i}$ are locally $1$-forms on the base,
hence their wedge product with $e^{1234}=\dvol_\eta$ vanishes, the equation simplif\/ies to
\begin{gather*}
\sum\limits_{i=1}^3 \left(d_{A_t}\sigma_i- \frac{\partial A_t}{\partial t^i} \right) \wedge \omega_i=0
\qquad
\text{and}
\qquad
  F_{A_t}^--Q (F_\sigma  )=0,
\end{gather*}
where~$Q$ is the linear map on $2$-forms def\/ined~by
\begin{gather*}
Q\big(dt^i\wedge dt^j\big) =Q\big(e^{4+i}\wedge e^{4+j}\big):=\sum\limits_{k=1}^3\epsilon^{ijk}\omega_k.
\end{gather*}
On the other hand, if $\mathbf{A}=A_t+\sum\sigma_i$ is a~${\rm G}_2$-instanton, then it minimises the Yang--Mills
functional~\eqref{YM(A)}.
This implies
\begin{gather*}
\sum\left\Vert d_{A_t}\sigma_i-\frac{\partial A_t}{\partial t^i}\right\Vert^2 +  \Vert F_\sigma \Vert^2 =0,
\end{gather*}
since otherwise the pullback component $A_t$ alone would violate the minimum energy:
\begin{gather*}
{\rm YM}(A_t)= \Vert F_{A_t} \Vert^2< \Vert F_{\mathbf{A}} \Vert^2={\rm YM}(\mathbf{A}).
\end{gather*}
In particular $F_\sigma\equiv0$ and so every $A_t$ must be SD.
Finally, if the family $A_t\equiv A_{t_0}$ is constant, then $d_{A_{t_0}}\sigma_i=0$ implies $\sigma\equiv0$, since~by
assumption $\mathbf{E}$ is indecomposable and therefore does not admit nontrivial parallel sections.
This concludes the proof of Theorem~\ref{thm: G2-instantons over assoc fibrations}.

\begin{remark}
If $\mathcal{M}^{4}_+$ is discrete, then by continuity the family $\{A_t\}$ is
contained in a~gauge orbit; if the family is constant, then $\mathbf{A}$ is a~pullback.
\end{remark}

\subsection[${\rm G}_2$-torus f\/ibrations]{$\boldsymbol{{\rm G}_2}$-torus f\/ibrations}

A~$7$-torus $T^{7}=\mathbb{R}^{7}/\Lambda $ naturally inherits the ${\rm G}_2$-structure $\varphi
$ from $\mathbb{R}^7$.
Recall from Section~\ref{Subsect Chern-Simmons} that a~connection~$A$ on some bundle over $T^7$ is a~${\rm
G}_2$-instanton if and only if it is a~zero of the Chern--Simons $1$-form~\eqref{ro^ phi}:
\begin{gather}
\label{Chern--Simons 1-form}
\rho_{A} (b) = \int_{T^7}\tr (F_{A}\wedge b ) \wedge\ast\varphi.
\end{gather}
One asks what is the behaviour of the moduli space of ${\rm G}_2$-instantons under perturbations
\mbox{$\varphi\rightarrow\varphi+\phi$} of the ${\rm G}_2$-structure.
More precisely, given suitable assumptions, one asks whether $(\varphi+\phi)$-instantons exist at all once we deform the
lattice.
As a~working example, we consider the following class of f\/lat $T^3$-f\/ibred 7-tori:
\begin{definition}
A~\emph{${\rm G}_2$-torus fibration structure} is a~triplet $(\eta,L,\alpha)$ in which:
\begin{itemize}\itemsep=0pt
\item $\eta$ is a~metric on $\mathbb{R}^4$;
\item $L$ is a~lattice on the subspace $\Lambda^2_{+}(\mathbb{R}^4,\eta)$ of~$\eta$-self-dual $2$-forms;
\item $\alpha:\mathbb{R}^4\rightarrow\Lambda^2_{+}(\mathbb{R}^4,\eta)$ is a~linear map.
\end{itemize}
\end{definition}

Given the above data, set $V \doteq \mathbb{R}^4 \oplus\Lambda^2_{+}$ and form the torus $\mathbb{T} = V/\tilde{L}$,
with the lattice
\begin{gather*}
\tilde{L} \doteq \big\{ (\mu,\nu+\alpha\mu ) \,|
\,
\mu\in \mathbb{Z}^4,
\,
\nu \in L \big\} \subset V.
\end{gather*}
Then $\mathbb{T}$ inherits from~$V$ the ${\rm G}_2$-structure~$\varphi$ which makes the generators of $\tilde{L}$
orthonormal with respect to the induced inner-product~\eqref{eq:phi0 gives inner product}.
It is straightforward to check that $\mathbb{T}$ is an associative f\/ibration as in Definition~\ref{def:assoc_fibration}:
denoting by $e^5$, $e^6$, $e^7$ the $(\nu+\alpha\mu)$-orthonormal basis of the f\/ibre~$\Lambda^2_{+}$, the f\/lat
${\rm G}_2$-structure~\eqref{eq:G2 3-form} simplif\/ies to
$\varphi|_{\Lambda^2_{+}}=e^{567}=\dvol_{\varphi}|_{\Lambda^2_{+}}$; moreover the lattice $\tilde{L}$ on every tangent
subspace normal to the f\/ibre is just the lattice~$\mu$ from the base, so the corresponding metrics are the same.
Although $\mathbb{T}$ f\/ibres over the 4-torus $ \mathbb{R}^4/\mu$, the induced metric $g(\varphi)$ is \emph{not}, in
general, a~Riemannian product.

Suppose the moduli space $\mathcal{M}^{4}_+$ of self-dual connections on $E\to T^4$ is nonempty; then we have trivial
solutions to the ${\rm G}_2$-instanton equation on the pullback $\mathbf{E}\to\mathbb{T}$ simply by lifting
$\mathcal{M}^{4}_+$ as in Lemma~\ref{lemma: SD lifts to G2-instanton}, which proves the f\/irst part of
Theorem~\ref{thm: results}:

\begin{corollary}
\label{cor:self-dual in T4 lifts to G2-instanton}
If~$A$ is a~self-dual connection on $ E\to T^4$, then its pullback $f^*A $ by the fibration map $f:\mathbb{T}
\rightarrow T^{4}$ is a~${\rm G}_2$-instanton on $\mathbf{E}$.
\end{corollary}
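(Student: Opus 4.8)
The plan is to derive the corollary directly from Lemma~\ref{lemma: SD lifts to G2-instanton}, whose conclusion is precisely the desired self-duality/instanton equivalence for \emph{any} associative fibration. Thus the whole task reduces to confirming that the ${\rm G}_2$-torus fibration $f:\mathbb{T}\to T^4$ genuinely meets the hypotheses of Definition~\ref{def:assoc_fibration}, after which the lemma applies verbatim to the pullback $\mathbf{E}=f^*E$.

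First I would check that each fibre is associative. By construction the fibre tangent directions are spanned by the coframe $\{e^5,e^6,e^7\}$ dual to the $\Lambda^2_+$-factor, and the flat ${\rm G}_2$-structure~\eqref{eq:G2 3-form} restricts to $\varphi|_{\Lambda^2_+}=e^{567}$, which is exactly the induced volume form $\dvol_\varphi|_{\Lambda^2_+}$; hence every fibre $P_x$ is calibrated by $\varphi$, i.e.\ associative. Next I would confirm that $f$ is a Riemannian submersion: the restriction of the lattice $\tilde{L}$ to the subspace normal to the fibre coincides with the base lattice $\mu\in\mathbb{Z}^4$, so the metric $g(\varphi)$ induces on the horizontal distribution exactly the base metric $\eta$. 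Since $\mathbb{T}$ is flat, all fibres are smooth and there are no singular fibres to address, so $\mathbb{T}$ is an associative fibration in the required sense.

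With the hypotheses in place, Lemma~\ref{lemma: SD lifts to G2-instanton} yields that a connection $A$ on $E$ is self-dual if and only if its pullback $f^*A$ is a ${\rm G}_2$-instanton on $\mathbf{E}$; in particular a self-dual $A$ lifts to a ${\rm G}_2$-instanton, which is the claim. There is no genuine obstacle here: the only substantive point is the pointwise calibration identity $\varphi|_{\Lambda^2_+}=\dvol_\varphi|_{\Lambda^2_+}$, a linear-algebra verification already carried out in setting up the ${\rm G}_2$-torus fibration, and the remaining steps are immediate from the definitions.
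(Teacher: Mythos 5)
Your proposal is correct and follows essentially the same route as the paper: the text preceding the corollary verifies that $\mathbb{T}$ is an associative fibration (via $\varphi|_{\Lambda^2_+}=e^{567}=\dvol_\varphi|_{\Lambda^2_+}$ and the matching of the normal lattice with the base lattice) and then obtains the corollary as an immediate application of Lemma~\ref{lemma: SD lifts to G2-instanton}. No discrepancy to report.
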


For future reference, I denote the set of such~$\varphi$-instantons obtained by lifts from $\mathcal{M}^{4}_+$ by
\index{moduli space}
\begin{gather}
\widetilde{\mathcal{M}^{4}_+}:= f^*\mathcal{M}^{4}_+ \subset\mathcal{B}^7.
\label{eq def Mtilde+}
\end{gather}

We know from $4$-dimensional gauge theory that SD connections on a~complex vector bundle $E\to T^4$ correspond to stable
holomorphic structures on~$E$, thus in such cases we have examples of ${\rm G}_2$-instantons on bundles over~$\mathbb{T}$.

\subsection[Deformations of $\mathbb{T}$]{Deformations of $\boldsymbol{\mathbb{T}}$}

Working on a~bundle $\mathbf{E}\rightarrow \mathbb{T}$ with compact structure group over a~f\/ixed ${\rm G}_2$-torus
f\/ibration, let us ponder in generality about the behaviour of instantons under a~deformation of the ${\rm G}_2$-structure:
\begin{gather*}
\varphi\rightarrow\varphi+\phi,
\qquad
*_\varphi\varphi\rightarrow *_\varphi\varphi+\xi_{\phi},
\qquad
\xi_{\phi}:= *_{\varphi+\phi}(\varphi+\phi)-*_\varphi\varphi \in\Omega^4  (\mathbb{T}  ).
\end{gather*}
An arbitrary deformation~$\phi$ does not in general preserve the f\/ibred structure of $\mathbb{T}$:
\begin{proposition}
\label{Prop: decomp deformations}
A~deformation $\xi_{\phi}\in\Lambda^4(\mathbb{T})$ of the coassociative $4$-form $*_\varphi\varphi$ has four orthogonal
components, with the following significance:
\begin{gather*}
\Lambda^4\big(\mathbb{R}^4\oplus \Lambda^2_+\big)= \underset{\rm (I)}{\underbrace{\Lambda^4\big(\mathbb{R}^4\big)}} \,\oplus\,
\underset{\rm (II )}{\underbrace{\Lambda^3\big(\mathbb{R}^4\big)\otimes \Lambda^1\big(\Lambda^2_+\big)}} \,\oplus\,
\underset{\rm (III)}{\underbrace{\Lambda^2\big(\mathbb{R}^4\big)\otimes \Lambda^2\big(\Lambda^2_+\big)}} \,\oplus\,
\underset{\rm (IV)}{\underbrace{\Lambda^1\big(\mathbb{R}^4\big)\otimes \Lambda^3\big(\Lambda^2_+\big)}},
\end{gather*}
\begin{enumerate}\itemsep=0pt
\item[{\rm (I)}] corresponds to a~rescaling of the metric~$\eta$ on $\mathbb{R}^4$;
\item[{\rm (II)}] redefines the map~$\alpha$;
\item[{\rm (III)}] splits as $\operatorname{Hom}(\Lambda^2_+,\Lambda^2_+)\oplus\operatorname{Hom}(\Lambda^2_-,\Lambda^2_+)$, where the first factor
modifies the lattice~$L$ and the second one affects the conformal class of~$\eta$;
\item[{\rm (IV)}] parametrises deformations transverse to the fibred structures.
\end{enumerate}

\begin{proof}
Let us examine the four cases.

(I) If $\xi_{\phi}\in\Lambda^4(\mathbb{R}^4)\simeq\mathbb{R}$, then it must be a~multiple of
$\ast\varphi|_{\mathbb{R}^4}=e^{1234}=\dvol_{\eta}$.

(II) Since $\Lambda^3(\mathbb{R}^4)\otimes \Lambda^1(\Lambda^2_+)\simeq \mathbb{R}^4\otimes
(\Lambda^2_+)^*\simeq\operatorname{Hom}(\mathbb{R}^4,\Lambda^2_+)$, such deformations are precisely linear maps
$\mathbb{R}^4\rightarrow\Lambda^2_{+}$.

(III) Clearly $\Lambda^2(\mathbb{R}^4)=\Lambda^2_+ \oplus \Lambda^2_-$ and $\Lambda^2(\Lambda^2_+)\simeq
(\Lambda^2_+)^*$, so the product decomposes as
\begin{gather*}
\big(\Lambda^2_+ \otimes \big(\Lambda^2_+\big)^* \big) \oplus \big(\Lambda^2_- \otimes \big(\Lambda^2_+\big)^*
\big)\simeq\operatorname{Hom}\big(\Lambda^2_+,\Lambda^2_+\big) \oplus \operatorname{Hom}\big(\Lambda^2_-,\Lambda^2_+\big).
\end{gather*}
Now, on one hand, acting with an endomorphism on $\Lambda^2_{+}$ is equivalent to redef\/ining the triplet
$\{e^5,e^6,e^7\}$, hence the lattice $L\subset\Lambda^2_{+}$.
On the other hand, since the orthogonal split $\Lambda^2= \Lambda^2_{-}\oplus\Lambda^2_{+}$ is conformally invariant,
a~map $\Lambda^2_{-}\to\Lambda^2_{+}$ redef\/ines the orthogonal complement of $\Lambda^2_-$ and hence the conformal
class.

(IV) Since $\Lambda^3(\Lambda^2_+)\simeq\mathbb{R}$, this component is just $\Lambda^1(\mathbb{R}^4)$, which is
irreducible in the sense that~$\mathbb{T}$ has no distinguished subspaces in $\mathbb{R}^4$.
Then either every $7$-torus is a~${\rm G}_2$-f\/ibration, which is obviously false, or these are precisely the
deformations away from said structures.
\end{proof}
\end{proposition}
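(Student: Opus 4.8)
The plan is to separate the purely algebraic decomposition of $\Lambda^4 V$ from the geometric interpretation of each summand, treating the latter as the real content of the statement. First I would invoke the canonical decomposition of the exterior power of a direct sum: since $V = \mathbb{R}^4 \oplus \Lambda^2_+$ with factors of dimension $4$ and $3$, one has $\Lambda^4 V \cong \bigoplus_{p+q=4} \Lambda^p(\mathbb{R}^4) \otimes \Lambda^q(\Lambda^2_+)$, and this sum is orthogonal with respect to the inner product induced by~\eqref{eq:phi0 gives inner product}. The dimension constraints $p \leq 4$ and $q \leq 3$, together with the vanishing $\Lambda^4(\Lambda^2_+)=0$, eliminate all pairs except $(4,0)$, $(3,1)$, $(2,2)$, $(1,3)$, leaving exactly the four summands (I)--(IV). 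This step is routine multilinear algebra.

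The substance lies in identifying each summand with a deformation of the defining triplet $(\eta,L,\alpha)$, for which I would apply Hodge-type isomorphisms on the two factors. On $\mathbb{R}^4$ one has $\Lambda^3 \cong (\mathbb{R}^4)^*$ and $\Lambda^4 \cong \mathbb{R}$; on the three-dimensional fibre $\Lambda^2_+$ one has $\Lambda^1 \cong \Lambda^2 \cong (\Lambda^2_+)^*$ and $\Lambda^3 \cong \mathbb{R}$. Applying these: summand (I) is spanned by $\dvol_\eta = e^{1234}$, so a deformation there rescales $\eta$; summand (II) becomes $\operatorname{Hom}(\mathbb{R}^4,\Lambda^2_+)$, exactly the space in which the linear map~$\alpha$ lives; and summand (IV) collapses to $(\mathbb{R}^4)^*$.

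The crux is summand (III). Here I would use the self-dual/anti-self-dual splitting $\Lambda^2(\mathbb{R}^4) = \Lambda^2_+ \oplus \Lambda^2_-$, which itself depends on~$\eta$, to rewrite the summand as $\operatorname{Hom}(\Lambda^2_+,\Lambda^2_+) \oplus \operatorname{Hom}(\Lambda^2_-,\Lambda^2_+)$. Interpreting the first factor as a redefinition of the orthonormal fibre triplet $\{e^5,e^6,e^7\}$, hence of the lattice~$L$, is direct; the second factor requires recognising that, because the $\pm$-splitting is conformally invariant, a map $\Lambda^2_- \to \Lambda^2_+$ alters the orthogonal complement of $\Lambda^2_-$ and thereby the conformal class of~$\eta$.

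I expect this geometric matching, rather than the decomposition itself, to be the main obstacle, together with the claim for (IV): one argues that since this summand carries no distinguished subspace of~$\mathbb{R}^4$, it cannot encode any reparametrisation of $(\eta,L,\alpha)$---else every flat $7$-torus would be a ${\rm G}_2$-fibration, which is absurd---so by elimination it must parametrise the deformations transverse to the fibred structures.
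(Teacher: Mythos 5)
Your proposal is correct and follows essentially the same route as the paper: the routine decomposition $\Lambda^4(\mathbb{R}^4\oplus\Lambda^2_+)=\bigoplus_{p+q=4}\Lambda^p(\mathbb{R}^4)\otimes\Lambda^q(\Lambda^2_+)$ followed by the same case-by-case geometric identifications, including the splitting of (III) into $\operatorname{Hom}(\Lambda^2_+,\Lambda^2_+)\oplus\operatorname{Hom}(\Lambda^2_-,\Lambda^2_+)$ via conformal invariance of the $\pm$-decomposition and the elimination argument for (IV). No gaps.
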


We will now describe what happens to the zeroes of~\eqref{Chern--Simons 1-form} under the corresponding perturbation of
the Chern--Simons $1$-form:
\begin{gather*}
\rho \rightarrow \rho_{\phi}:= \rho +r_{\phi},
\qquad
(r_{\phi})_{A}(b) = \int_{\mathbb{T}} \tr (F_{A}\wedge b ) \wedge \xi_{\phi}.
\end{gather*}
Clearly a~$\varphi $-instanton~$A$ is also a~$(\varphi +\phi) $-instanton if and only if $(r_{\phi})_{A}\equiv 0$.
There is little reason, however, to expect such a~coincidence; as we will see, the topology of the bundle may constrain
the existence of instantons under certain~-- indeed most~-- deformations.

Denoting henceforth by $\mathcal{A}$ the space of connections over the $7$-manifold $\mathbb{T}$, let us brief\/ly digress
into the translation action of some vector $v \in \mathbb{T}$ on some $A\in \mathcal{A}$.
The f\/irst order variation is given by the bundle-valued 1-form
\begin{gather*}
(\beta_{v})_{A}:= v \lrcorner F_{A},
\end{gather*}
which we interpret as a~vector in $T_{A} \mathcal{A}$.
Notice f\/irst that in the direction $\beta_v$ the value of the Chern--Simons 1-form is independent of the base-point:

\begin{lemma}
\label{lemma: rho(bv) constant}
The function $\rho (\beta_v): \mathcal{A} \to \mathbb{R}$ is constant.
\begin{proof}
The computation is straightforward:
\begin{gather*}
\rho (\beta_v)_{A+ha}  =  \int_{\mathbb{T}}\tr F_{A+ha}\wedge v\lrcorner F_{A+ha}\wedge \ast\varphi
 =  -\frac{1}{2}\int_{\mathbb{T}}\tr F_{A+ha}\wedge F_{A+ha}\wedge (v\lrcorner\ast\varphi)
\\
\phantom{\rho  (\beta_v )_{A+ha}}
 =  -\frac{1}{2}\int_{\mathbb{T}} (\tr F_{A}\wedge F_{A}+d\chi )\wedge (v\lrcorner\ast\varphi)
 =  -\frac{1}{2}\int_{\mathbb{T}}\tr F_{A}\wedge F_{A}\wedge (v\lrcorner\ast\varphi)
 =  \rho (\beta_v)_{A},
\end{gather*}
where $d\chi$ is the exact differential given by Chern--Weil theory and we use Stokes' theorem and Cartan's identity $d
(v\lrcorner\ast\varphi)= \mathcal{L}_{v}(\ast\varphi)=0$, since~$\varphi$ is constant on the flat torus.
\end{proof}
\end{lemma}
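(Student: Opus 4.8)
The plan is to show that the $A$-dependence of $\rho(\beta_v)_A = \int_{\mathbb{T}} \tr\bigl(F_A \wedge (v\lrcorner F_A)\bigr)\wedge\ast\varphi$ drops out after rewriting the integrand as a Chern--Weil density wedged against a closed $3$-form, whereupon Stokes' theorem does the rest.

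First I would recast the integrand. Since $\mathbb{T}$ is $7$-dimensional, the $8$-form $\tr(F_A\wedge F_A)\wedge\ast\varphi$ vanishes identically; contracting it with $v$ and using that $v\lrcorner$ is an antiderivation gives
\begin{gather*}
0 = v\lrcorner\bigl[\tr(F_A\wedge F_A)\wedge\ast\varphi\bigr] = \bigl[v\lrcorner\tr(F_A\wedge F_A)\bigr]\wedge\ast\varphi + \tr(F_A\wedge F_A)\wedge(v\lrcorner\ast\varphi).
\end{gather*}
Because $v\lrcorner F_A$ is a $1$-form and $F_A$ a $2$-form, the graded symmetry of the trace of matrix-valued forms yields $v\lrcorner\tr(F_A\wedge F_A) = 2\,\tr\bigl(F_A\wedge(v\lrcorner F_A)\bigr)$, so the integrand becomes $-\tfrac12\,\tr(F_A\wedge F_A)\wedge(v\lrcorner\ast\varphi)$. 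This is exactly the algebraic step I expect to be the crux: one must handle the antiderivation property of the interior product together with the graded commutativity of the trace of Lie-algebra-valued forms, and it is here that the factor $-\tfrac12$ and the sign bookkeeping genuinely matter.

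With the integrand in this form, the second step is Chern--Weil theory. Along a variation $A\mapsto A+ha$ the density transgresses as $\tr(F_{A+ha}\wedge F_{A+ha}) = \tr(F_A\wedge F_A) + d\chi$ for the Chern--Simons transgression $3$-form $\chi$, so the whole difference $\rho(\beta_v)_{A+ha}-\rho(\beta_v)_A$ reduces to $-\tfrac12\int_{\mathbb{T}} d\chi\wedge(v\lrcorner\ast\varphi)$.

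Finally I would integrate by parts: since $\mathbb{T}$ is closed, $\int_{\mathbb{T}} d\chi\wedge(v\lrcorner\ast\varphi) = \pm\int_{\mathbb{T}}\chi\wedge d(v\lrcorner\ast\varphi)$, and Cartan's identity $d(v\lrcorner\ast\varphi) = \mathcal{L}_v(\ast\varphi) - v\lrcorner d\ast\varphi$ makes this vanish: the coassociative form satisfies $d\ast\varphi=0$, and on the flat torus $\ast\varphi$ is translation-invariant, so $\mathcal{L}_v(\ast\varphi)=0$ as well. Hence the difference is zero for every $a$ and every $h$, and $\rho(\beta_v)$ is constant on $\mathcal{A}$.
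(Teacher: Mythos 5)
Your proposal is correct and follows essentially the same route as the paper: rewrite $\tr(F_A\wedge(v\lrcorner F_A))\wedge\ast\varphi$ as $-\tfrac12\tr(F_A\wedge F_A)\wedge(v\lrcorner\ast\varphi)$ via the antiderivation identity on the vanishing $8$-form, invoke the Chern--Weil transgression $\tr(F_{A+ha}\wedge F_{A+ha})=\tr(F_A\wedge F_A)+d\chi$, and kill the exact term by Stokes together with $d(v\lrcorner\ast\varphi)=\mathcal{L}_v(\ast\varphi)=0$ on the flat torus. You merely make explicit the sign and factor bookkeeping that the paper leaves implicit.
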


Similarly, evaluating $r_{\phi}$ on $\beta_v$ gives
\begin{gather*}
r_{\phi}(\beta_v)_{A} = \int_{\mathbb{T}} \tr (F_{A}\wedge (\beta_v)_{A} ) \wedge\xi_{\phi}
 = -\tfrac{1}{2}\int_{\mathbb{T}}\tr (F_{A}\wedge F_{A} ) \wedge  (v\lrcorner \xi_{\phi}  )
 = \langle c_{2}(E),S_{\phi}(v)\rangle,
\end{gather*}
where $S_{\phi}(v) \doteq -\tfrac{1}{2}[v\lrcorner \xi_{\phi} ]^{PD}$, and this depends only on
the topology of~$E$, not on the point~$A$.

\begin{remark}
Hence we may interpret $\phi $ as def\/ining a~linear functional
\begin{alignat*}{3}
& N_{\phi}: \ \  && \mathbb{R}^7 \rightarrow  \mathbb{R}, & \\
&&& v\mapsto  \langle c_{2}(E),S_{\phi} (v)  \rangle, &
\end{alignat*}
such that $N_{\phi}\neq 0$ implies no~$\varphi$-instanton is still a~$(\varphi +\phi ) $-instanton.
This is, however, a~rather weak obstruction, since the map $\phi \mapsto N_{\phi}$ has kernel of dimension at least~$28$
and thus, in principle, leaves plenty of possibilities for instantons of perturbed ${\rm G}_2$-structures.
\end{remark}

Now consider specif\/ically a~translation vector on the base $v\in T^4$.
Notice that for deformations~$\phi$ of types (I), (II) or (III) the contraction of $ \xi_{\phi} $ with such~$v$ gives
$S_{\phi}(v) =0$, so~$\phi$ only ef\/fectively contributes to the function $\rho (\beta_v) $ when
$\xi_\phi\in\Lambda^1(\mathbb{R}^4)$, which means the perturbed torus is no longer a~f\/ibred structure
(Proposition~\ref{Prop: decomp deformations}).
Moreover, either the bundle~$E$ is f\/lat and~$\beta_v$ vanishes identically, or~$c_2(E)\neq0$ and the following holds:

\begin{lemma}
\label{lemma r_phi(bv) nonzero}
If $c_2(E)\neq0$ and~$\phi$ is of type {\rm (IV)}, then there exists $v\in T^4$ such that
$r_{\phi}(\beta_v)$ is a~non-zero constant.
\end{lemma}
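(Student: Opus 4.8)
The plan is to make the constant $r_\phi(\beta_v)_A=\langle c_2(E),S_\phi(v)\rangle$ fully explicit for a type (IV) deformation and then exhibit a base translation on which it does not vanish. Since $\Lambda^3(\Lambda^2_+)$ is one-dimensional, spanned by the vertical volume form $e^{567}$, any $\phi$ of type (IV) has
$$\xi_\phi=\sum_{j=1}^4\lambda_j\,e^j\wedge e^{567},\qquad (\lambda_1,\dots,\lambda_4)\neq0.$$
First I would contract with a base vector $v\in T^4\subset\mathbb{R}^4$. Because $v\lrcorner e^{567}=0$ while $v\lrcorner e^j=v^j$, this gives the clean expression $v\lrcorner\xi_\phi=\big(\sum_{j=1}^4\lambda_j v^j\big)\,e^{567}$, a scalar multiple of the fibre volume.

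The decisive step is then a Poincar\'e-duality computation on the flat torus $\mathbb{T}=V/\tilde L$. In the horizontal/vertical splitting $V=\mathbb{R}^4\oplus\Lambda^2_+$ the class $[e^{567}]$ is dual to a horizontal section $s(T^4)$ of $f$, so that $[e^{567}]^{PD}=[s(T^4)]$. Hence $S_\phi(v)=-\tfrac12\big(\sum_j\lambda_j v^j\big)[s(T^4)]$, and using $c_2(\mathbf{E})=f^*c_2(E)$ together with $f\circ s=\operatorname{id}$ the pairing collapses to the base:
$$r_\phi(\beta_v)_A=\langle c_2(E),S_\phi(v)\rangle=-\tfrac12\Big(\sum_{j=1}^4\lambda_j v^j\Big)\,\langle c_2(E),[T^4]\rangle.$$

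To conclude, I would use that on the closed oriented four-manifold $T^4$ the hypothesis $c_2(E)\neq0$ is equivalent to the Chern number $\langle c_2(E),[T^4]\rangle$ being nonzero, since $H^4(T^4)\cong\mathbb{Z}$ via evaluation on the fundamental class. As $\phi$ is nontrivial, some $\lambda_j\neq0$, so taking for instance $v^j=\lambda_j$ yields $\sum_j\lambda_j v^j=\sum_j\lambda_j^2>0$ and the displayed quantity is a nonzero real number; its independence of $A$ is exactly the content of the computation preceding the statement (cf.\ Lemma~\ref{lemma: rho(bv) constant}). The hard part is really just the Poincar\'e-dual identification: once the trivial vertical cohomology of the flat torus is used to see that pairing against $[e^{567}]^{PD}$ reduces to integrating $c_2(E)$ over the base, everything else---the contraction and the choice of $v$---is elementary, and types (I)--(III) drop out because their contraction with $v$ lands outside $\Lambda^3(\Lambda^2_+)$, as recorded before the statement via Proposition~\ref{Prop: decomp deformations}.
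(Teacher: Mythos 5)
Your proof is correct and takes essentially the same route as the paper: write the type (IV) perturbation as (a $1$-form on $T^4$) $\wedge$ (fibre volume form), contract with a base translation $v$ on which that $1$-form is nonzero, and reduce $r_\phi(\beta_v)$ to a nonzero multiple of the Chern number $\langle c_2(E),[T^4]\rangle$, which is nonzero since $H^4(T^4)\cong\mathbb{Z}$. The only cosmetic difference is that the paper performs the fibre integration directly (normalising $\operatorname{Vol}(T^3)=1$) where you phrase the same step via the Poincar\'e dual identification $[e^{567}]^{PD}=[s(T^4)]$.
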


\begin{proof}
Denoting $T^3$ the typical fibre of~$f$ (and setting $\operatorname{Vol}(T^3) =1$), we may assume
\begin{gather*}
\xi_\phi =-2\varepsilon \wedge \dvol_{T^3}
\end{gather*}
for some $0\neq\varepsilon \in \Lambda^{1}(T^{4}) $.
One can always choose $v\in T^{4}$ such that $\varepsilon (v)\neq0$, and consider
$(\beta_v)_{A}=v\lrcorner F_{A}$.
Then
\begin{gather*}
r_{\phi}(\beta_v)_{A} = -2\int_\mathbb{T}\tr (F_{A}\wedge v\lrcorner F_{A} ) \wedge \varepsilon\wedge d\operatorname{Vol}_{T^{3}}
 =  -2\int_{T^{4}}\tr (F_{A}\wedge v\lrcorner F_{A} ) \wedge \varepsilon
 =  \varepsilon(v) \cdot c_{2}(E),
\end{gather*}
which is nonzero by assumption.
\end{proof}

So far we know from Corollary~\ref{cor:self-dual in T4 lifts to G2-instanton} that the set $\mathcal{M}^{4}_+$ of
self-dual connections (modulo gauge) over $T^{4}$ lifts to instantons (cf.~\eqref{eq def Mtilde+}) of the original ${\rm
G}_2$-structure $\varphi $ (i.e.~to zeroes of~$\rho$).
However, for bundles with non-trivial $c_2$, this generic case degenerates precisely under deformations of type~(IV):

\begin{proposition}
\label{prop:no (phi+phi-instantons)}
Let $\mathbf{E}\rightarrow (\mathbb{T}, \varphi) $ be the pullback of a~stable ${\rm SU}(n)$-bundle~$E$ over $T^4$
with $c_{2}(E)\neq 0$; then~$E$ admits no $(\varphi +\phi ) $-instantons, for any perturbation $\phi $ away
from a~fibred structure $($i.e.~of type {\rm (IV)} in Proposition~{\rm \ref{Prop: decomp deformations})}.
\begin{proof}
Fix a~lifted~$\varphi$-instanton $A\in\widetilde{\mathcal{M}^{4}_+}$; for any $A+ha\in\mathcal{A}$,
Lemma~\ref{lemma: rho(bv) constant} gives $\rho_{A+ha}(\beta_v)\equiv\rho_A(\beta_v)=0$. Taking $v\in T^{4}$ as
in Lemma~\ref{lemma r_phi(bv) nonzero} we have
\begin{gather*}
\rho_{\phi}(\beta_v)_{A+ha}  =  r_{\phi}(\beta_v)_{A+ha} +\rho (\beta_v)_{A+ha}
 =  \underset{\neq0}{\underbrace{\varepsilon (v) \cdot c_{2}(E)}} + \underset{0}{\underbrace{\rho
(\beta_v)_{A}}},
\end{gather*}
hence $A+ha$ is not a~$(\varphi+\phi)$-instanton.
\end{proof}
\end{proposition}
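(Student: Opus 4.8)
The plan is to find a single tangent direction on the space of connections along which the perturbed Chern--Simons $1$-form $\rho_\phi=\rho+r_\phi$ takes a nonzero \emph{constant} value over all of $\mathcal{A}$. Since a $(\varphi+\phi)$-instanton must be a zero of $\rho_\phi$ -- in particular $\rho_\phi$ must vanish in every direction there -- exhibiting one direction in which $\rho_\phi$ never vanishes will show $\mathcal{M}^7_{\varphi+\phi}=\varnothing$ outright, with no connection, lifted or otherwise, surviving.

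First I would use the hypotheses to produce a reference instanton: stability of $E$ together with $c_2(E)\neq0$ guarantees, via the correspondence between stable bundles and self-dual connections in $4$-dimensional gauge theory, that $\mathcal{M}^{4}_+$ is nonempty, so by Corollary~\ref{cor:self-dual in T4 lifts to G2-instanton} there is a lifted $\varphi$-instanton $A_0\in\widetilde{\mathcal{M}^{4}_+}$, i.e.\ $\rho_{A_0}\equiv0$. This reference point is precisely what lets me pin down the otherwise-unknown constant below.

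Next I would pick a base translation vector $v\in T^4$ as in Lemma~\ref{lemma r_phi(bv) nonzero}, so that $r_\phi(\beta_v)\equiv\varepsilon(v)\cdot c_2(E)$ is a nonzero constant on $\mathcal{A}$. By Lemma~\ref{lemma: rho(bv) constant} the unperturbed function $\rho(\beta_v):\mathcal{A}\to\mathbb{R}$ is likewise constant; evaluating it at $A_0$ yields the value $\rho_{A_0}(\beta_v)=0$, whence $\rho(\beta_v)\equiv0$ everywhere. Adding the two pieces, $\rho_\phi(\beta_v)_A=\rho(\beta_v)_A+r_\phi(\beta_v)_A=\varepsilon(v)\cdot c_2(E)\neq0$ for \emph{every} $A\in\mathcal{A}$, which forbids any $A$ from being a zero of $\rho_\phi$.

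The main obstacle is conceptual rather than computational: one must recognise that the decisive quantity $\rho_\phi(\beta_v)$ is globally constant on $\mathcal{A}$, so that a single evaluation fixes it everywhere and promotes a statement about lifted instantons into a statement about the entire moduli space. Both constancy claims rely on the flatness of $\mathbb{T}$ -- which renders $\ast\varphi$ and $\xi_\phi$ translation-invariant and kills the relevant Lie derivatives -- together with Chern--Weil invariance of $\tr(F\wedge F)$ modulo exact forms. It is equally essential that $\phi$ be of type~{\rm (IV)}: for types (I)--(III) the contraction $v\lrcorner\xi_\phi$ vanishes for $v\in T^4$ and $r_\phi(\beta_v)$ would be zero, so the obstruction appears only for deformations genuinely away from the fibred structure. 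The sole input from outside the flat-torus bookkeeping is the nonemptiness of $\mathcal{M}^{4}_+$, which is exactly where stability enters.
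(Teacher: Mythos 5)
Your proposal is correct and follows essentially the same route as the paper: fix a lifted $\varphi$-instanton as the reference point, invoke Lemma~\ref{lemma: rho(bv) constant} to conclude $\rho(\beta_v)\equiv0$ on all of $\mathcal{A}$, and add the nonzero constant $r_\phi(\beta_v)\equiv\varepsilon(v)\cdot c_2(E)$ from Lemma~\ref{lemma r_phi(bv) nonzero} to rule out every zero of $\rho_\phi$. The only addition is your explicit remark that stability guarantees $\mathcal{M}^4_+\neq\varnothing$, which the paper leaves implicit.
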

Combining Corollary~\ref{cor:self-dual in T4 lifts to G2-instanton} and Proposition~\ref{prop:no
(phi+phi-instantons)} we obtain Theorem~\ref{thm: results}.

\subsection*{Acknowledgements}

The author thanks Simon Donaldson for suggesting the matter of this paper, Thomas Walpuski for sharing some unpublished
notes and Marcos Jardim for several useful discussions.
Special thanks also to the anonymous referees for numerous mathematical and reference contributions.

\pdfbookmark[1]{References}{ref}
\LastPageEnding

\end{document}